\newtheorem{theorem}{Theorem}
\newtheorem{remark}{Remark}
\newcommand{\R}{\mathbb R}
\newcommand{\N}{\mathbb N}
\newcommand{\abs}[1]{\left\lvert #1 \right\rvert}
\newcommand{\ip}[2]{\left\langle#1,#2\right\rangle}
\newcommand{\iip}[2]{\left(#1,#2\right)}
\newcommand{\eps}{\varepsilon}
\newcommand{\spec}{\operatorname{spec}}
\def \p{\partial}
\DeclareMathOperator{\Div}{div}
\def\beq{\begin{equation} } \def\eeq{\end{equation}}
\def\eps{\varepsilon}  
\def\ben{\begin{enumerate} }
\def\een{\end{enumerate} }
\def \p { \partial}
\begin{document}
\begin{frontmatter}

\title{Principal spectral rigidity implies subprincipal spectral rigidity}
\date{\today}
\author[2]{Maarten V. de Hoop}
 \ead{mvd2@rice.edu}
 \address[2]{Rice University, Houston, Texas, USA}

 \author[3]{Joonas Ilmavirta}
 \address[3]{University of Jyv\"{a}skyl\"{a}, Jyv\"{a}skyl\"{a}, Finland}
 \ead{joonas.ilmavirta@jyu.fi}

\author[4]{Vitaly Katsnelson\fnref{label2}}
\address[4]{New York Tech, New York, NY, USA}
 \ead{vkatsnel@nyit.edu}
 \fntext[label2]{Corresponding Author}
 

\begin{abstract}
We study the inverse spectral problem of jointly recovering a radially symmetric Riemannian metric and an additional coefficient from the Dirichlet spectrum of a perturbed Laplace-Beltrami operator on a bounded domain. Specifically, we consider the elliptic operator  
\[
L_{a,b} := e^{a-b} \nabla \cdot e^b \nabla
\]
on the unit ball \( B \subset \mathbb{R}^3 \), where the scalar functions \( a = a(|x|) \) and \( b = b(|x|) \) are spherically symmetric and satisfy certain geometric conditions. While the function \( a \) influences the principal symbol of \( L \), the function \( b \) appears in its first-order terms. We investigate the extent to which the Dirichlet eigenvalues of \( L_{a,b} \) uniquely determine the pair \( (a, b) \) and establish spectral rigidity results under suitable assumptions. 
\end{abstract}

\begin{keyword}
Spectral inverse problem, spectral rigidity, Kato perturbation



\end{keyword}

\end{frontmatter}

\section{Introduction}

We investigate the joint recovery of a (radially symmetric Riemannian) metric and an additional coefficient from a single spectrum of an elliptic operator $L$ in a rigidity sense on a manifold with boundary. The operator under consideration is a perturbed Laplace Beltrami operator and has the form:\[
L= L_{a,b}:= e^{a-b}\nabla \cdot e^b \nabla,
\]
for spherically symmetric functions $a = a(|x|), b = b(|x|)$ satisfying certain assumption described later. Note that only $a$ appears in the principal symbol of $L$ while $b$ appears in the first order constituent of $L$.
The boundary conditions are Dirichlet, and the domain is the unit ball $B \subset \mathbb R^3$. Everything will be radial, so the two scalar functions $a$ and $b$ will only depend on $r=\abs{x}$. The proof works for Neumann/Robin boundary conditions with an extra assumption described in the main theorem.
\begin{remark}
Our setup and results include a number of important settings.
In the acoustic wave setting for fluid regions, the pressure field $p(t,x)$ satisfies the wave equation
 \[ - \kappa\nabla \cdot (\rho^{-1}\nabla p) = \omega^2 p
 \]
 where $\kappa>0$ is the bulk modulus and $\rho$ is the density. The natural boundary condition is $p|_{\p B} = 0$. Hence, our results apply to the acoustic wave equation where $e^b = \rho^{-1}$ and $e^{a-b} = \kappa$, so that $e^a = c^2 = \kappa/\rho.$
Also, as shown in \cite{HIKRigidity}, in a spherically symmetry Earth model, the radial constituent of toroidal modes must satisfy the scalar equation of the form
\[
\rho^{-1}
       (-\nabla \cdot \mu \nabla + r^{-1}(\p_r \mu)) v = \omega^2 v
\]
where $\mu>0$ is a Lam\'{e} coefficient and $\rho$ is the density, subject to a natural Robin type boundary condition $\mu \p_r v|_{\p B} = r^{-1}\mu \ v|_{\p B}$. 
With $a,b$ defined by $e^b = \mu$ and $e^{a-b} = \rho^{-1}$, so that $e^a = c_S^2 = \mu/\rho.$ This operator becomes $e^{a-b}\nabla \cdot e^b \nabla - r^{-1}e^{a-b}(\p_r e^b)$ which is the operator we consider here modulo a zeroth order term. With a slight modification to the proof, our results apply to this operator as well.
\end{remark}

\emph{In our previous work \cite{HIKRigidity}, we demonstrated rigidity of the wave speed (corresponding to the coefficient $a$) but do not address the recovery of the density of mass (associated to coefficient $b$). The primary challenge lies in the fact that $b$ does not appear in the principal symbol of the operator $L$, rendering standard methods using the length spectrum inapplicable.
To the best of our knowledge, this is the first result of its kind concerning a manifold with boundary, where we recover multiple coefficients from a single spectrum (in a rigidity sense) while the principal symbol remains independent of one of the coefficients. To recover an additional coefficient, we will present a novel application of Kato perturbation techniques in inverse spectral problems to recover an additional coefficient not in the principal symbol of the operator.} See \ref{a: history} for a history of this problem.

We require the so-called Herglotz condition on the wave speed while allowing an unsigned curvature; that is,
curvature can be everywhere positive or it can change sign, and we
allow for conjugate points. 
Our manifold is the Euclidean ball $M = \bar B(0,1)\subset \mathbb{R}^3$, with the metric $g(x) =
e^{-a(|x|)} e(x)$, where $e$ is the standard Euclidean metric and
$a \colon (0,1]_r \to (0,\infty)$ is a function satisfying suitable
conditions, where $r = |x|$ is the radial coordinate.
We work in dimension three, but the results may be generalized to higher dimension.

We also need an assumption on the degeneracy of the spectrum of $L_0:= L_{a_0,b_0}$ corresponding to the coefficients $a_0, b_0$. As is well known, the spherical symmetry creates natural degenerecies in the spectrum, and we will assume that those are the only degenerecies in the spectrum. We also use the terminology from \cite{HZellipse} that when we have a family of functions $b_s$ that depend $C^\infty$ smoothly on $s$, then we say $\{b_s\}$ is \emph{flat} if its Taylor series at $s =0$ vanishes.

The main theorem we will prove is (see \cite{HIKRigidity} for the definitions of \emph{countable conjugacy condition}, \emph{clean intersection hypothesis}, and \emph{geometric spreading injectivity condition})
\begin{theorem} \label{t: intro main theorem acoustic)}
Let $B=\bar B(0,1)\setminus \bar B(0,R)\subset\R^3$, $R\geq0$, be an
annulus (or a ball if $R=0$).  Fix $\eps>0$ and let $e^{a_s/2}$,
$s\in(-\eps,\eps)$, be a $C^\infty$ function $[R,1]\to(0,\infty)$
satisfying the Herglotz condition and the countable conjugacy
condition and depending $C^\infty$-smoothly on the parameter $s$.
Assume also that the length spectrum satisfies the geometric spreading injectivity condition, and assume that the periodic broken rays satisfy the clean intersection hypothesis.
If
$R=0$, we assume that all odd order derivatives of $e^{a_s/2}$ vanish at
$0$. 
Assume $b_s(r)$ is a $C^\infty$ function and $b_s(1)$ is independent of $s$.
If $\text{spec}(L_s) = \text{spec}(L_{a_s,b_s})$ is the same for all $s$, then $a_s = a_0$ for each $s$, and $b_s$ is flat.
Moreover, the same result holds with Neumann/Robin boundary conditions under the assumption that 
$\p_r b_s(1)$ is independent of $s$. 
\end{theorem}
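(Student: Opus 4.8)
The plan is to decouple the problem into a \emph{principal} half, which pins down $a_s$ and is essentially the content of \cite{HIKRigidity}, and a \emph{subprincipal} half, which is the new contribution: a Kato/Hellmann--Feynman-type identity for the derivatives of the eigenvalues, combined with a density statement for squares of eigenfunctions. \emph{Step 1 (principal rigidity).} Since $b$ enters only the first-order part of $L_{a,b}=e^{a}(\Delta+\nabla b\cdot\nabla)$, it affects the amplitudes but not the singular support of the wave trace of $L_s$, so the length spectrum of the metric $g_{a_s}=e^{-a_s}e$ is a spectral invariant, the same for all $s$. Under the Herglotz condition and the geometric hypotheses stated in the theorem (countable conjugacy, clean intersection, geometric spreading injectivity), the deformation rigidity of \cite{HIKRigidity} applies and gives $a_s\equiv a_0=:a$. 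Consequently $L_s=L_{a,b_s}$ is self-adjoint and non-positive on $H_s:=L^2(B;e^{b_s-a}\,dx)$, with weak formulation $\int_B e^{b_s}\nabla u\cdot\nabla v\,dx=\lambda\int_B e^{b_s-a}uv\,dx$ for the Dirichlet eigenproblem, and $L_s-L_0=e^{a}\,\nabla(b_s-b_0)\cdot\nabla$ is a pure first-order perturbation.

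\emph{Step 2 (first-variation identity).} By the non-degeneracy hypothesis on $\spec(L_0)$, spherical symmetry is the only source of multiplicity; decomposing $L^2(B)$ into spherical-harmonic blocks, $L_s$ restricts on each block to a one-dimensional Sturm--Liouville operator with simple spectrum, so each eigenvalue branch $\lambda_k(s)$ and a choice of normalized eigenfunctions $u_k(s)=f_k(\cdot,s)\,Y$ depend $C^\infty$ on $s$ near $0$, and, since distinct branches of $\spec(L_0)$ are separated, the hypothesis $\spec(L_s)=\spec(L_0)$ forces $\lambda_k(s)\equiv\lambda_k(0)$ near $s=0$. Because each eigenfunction is a critical point of the Rayleigh quotient $R_s[u]=\int_B e^{b_s}|\nabla u|^2\,dx\big/\int_B e^{b_s-a}|u|^2\,dx$ and $u_k(s)-u_k(0)=O(s^m)$ whenever $b_s-b_0=O(s^m)$, an elementary Taylor expansion of $R_s$ shows that if $\p_s^j b_s|_{s=0}=0$ for $1\le j\le m-1$ (vacuous for $m=1$) and $c_m:=\p_s^m b_s|_{s=0}$, then
\begin{equation}
0=\p_s^m\big|_{s=0}\lambda_k(s)=\int_B c_m\big(e^{b_0}|\nabla u_k|^2-\lambda_k e^{b_0-a}|u_k|^2\big)\,dx=\tfrac12\int_B c_m\,\nabla\cdot\!\big(e^{b_0}\nabla(u_k^2)\big)\,dx ,
\end{equation}
the last equality using the eigenvalue equation $\nabla\cdot(e^{b_0}\nabla u_k)=-\lambda_k e^{b_0-a}u_k$ together with the product rule. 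Integrating by parts twice and invoking the Dirichlet condition $u_k|_{\p B}=0$ (so that $u_k^2$ and $\nabla(u_k^2)$ vanish on $\p B$) makes every boundary term disappear, leaving
\begin{equation}
\int_B \nabla\cdot\!\big(e^{b_0}\nabla c_m\big)\,u_k^2\,dx=0\qquad\text{for all } k .
\end{equation}
For Neumann/Robin conditions the two integrations by parts instead leave a boundary integral over $\p B$ assembled from $c_m$ and $\p_r c_m$ there, which vanishes precisely because $b_s(1)$ and $\p_r b_s(1)$ are independent of $s$; the same identity results.

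\emph{Step 3 (density of squares and bootstrap).} Since $b_0$ and $c_m$ are radial, $\psi:=\nabla\cdot(e^{b_0}\nabla c_m)$ is radial, and $\int_{S^2}u_k^2\,d\omega$ is proportional to $f_k(r)^2$, so the identities of Step~2 reduce to $\int_0^1\psi(r)\,f_k(r)^2\,r^2\,dr=0$ for every radial eigenfunction profile $f_k$. By the density of squares of eigenfunctions (Theorem~\ref{thm:density-of-squares}) — the linear span of $\{f_k^2\}$ is dense in a space of radial functions that, thanks to the boundary normalization of $b_s$, contains $\psi$ — we conclude $\psi=\nabla\cdot(e^{b_0}\nabla c_m)\equiv 0$. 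Pairing this with $c_m$ and integrating by parts (the boundary terms vanish since $c_m(1)=0$) gives $\int_B e^{b_0}|\nabla c_m|^2\,dx=0$, so $c_m$ is constant, and $c_m(1)=0$ forces $c_m\equiv 0$. Induction on $m$ now yields $\p_s^m b_s|_{s=0}=0$ for every $m\ge1$, i.e.\ $\{b_s\}$ is flat, which together with Step~1 completes the proof (the statement "$a_s\equiv a_0$ for all $s$" being part of the cited result of \cite{HIKRigidity}).

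\emph{Main obstacle.} The microlocal input of Step~1 is imported from \cite{HIKRigidity}; the genuinely new difficulty is the density-of-squares statement used in Step~3. One must identify a function space in which the squared radial profiles $f_k^2$ are total while it still contains the divergences $\nabla\cdot(e^{b_0}\nabla c_m)$ produced above — two requirements that pull against each other, and whose reconciliation is exactly where the boundary hypotheses on $b_s$ in the theorem are spent. Establishing this density, via the separation-of-variables structure of the radial eigenfunctions and their large-$k$ asymptotics, is the technical core of the argument.
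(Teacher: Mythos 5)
Your proposal is correct and follows essentially the same route as the paper: principal rigidity of $a_s$ imported from \cite{HIKRigidity}, a Hellmann--Feynman first-variation identity (your Rayleigh-quotient derivation is equivalent to the paper's $\lambda'=\iip{\psi}{L'\psi}$ with $L'=e^a\nabla b'\cdot\nabla$), integration by parts to produce $\int_B\abs{\psi}^2\Div(e^{b}\nabla c_m)\,dx=0$, Theorem~\ref{thm:density-of-squares}, the energy identity forcing $c_m\equiv 0$, and induction on the order of the $s$-derivative. The only divergence is in your closing remark: the paper establishes the density of squares not via large-$k$ eigenfunction asymptotics but by reducing it to the length-spectrum rigidity statement (Theorem~\ref{t: derivative rigidity}) through the perturbation $a_s=a+sf$.
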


The assumptions in the above theorem are natural for spectral inverse problems. Roughly speaking, the Herglotz condition ensures that rays do not become trapped, providing sufficient data for recovery. The geometric spreading injectivity condition serves as an analog to having a simple length spectrum, which prevents cancellations between different rays in a wave trace formula. The clean intersection hypothesis allows for recovery of the length spectrum from the spectral data, and the countable conjugacy condition avoids certain degenerate or anomalous cases in the length spectrum. Exact definitions can be found in \cite{HIKRigidity}. In the non-Dirichlet boundary case, we have an extra assumption that can be found in past inverse spectral works such as \cite{Sini1dSpectral}. As a crucial component to proving these theorems, we also investigate the squares of eigenfunctions associated with an elliptic partial differential operator, which are natural to study in other contexts to recover information of lower order coefficients (see \cite{YannickSquaredEfunctions} and \ref{a: history}).

Let us briefly describe the organization and proof of the theorem. In the first section, we verify the our result \cite[Theorem 1.3]{HIKRigidity} on spectral rigidity of the wave speed continues to hold when the density varies as well. Next, we apply this theorem to show the ``density of squares" of eigenfunctions for radial functions. Finally, we use Kato-type perturbation theory to show rigidity of the $b$, where we view the $b$ term of the operator $L_{a,b}$ as a perturbation of a Laplacian associated to a conformally Euclidean metric.

\section{Leading order spectral rigidity}

In previous work, we proved the following theorem that is reformulated using our notation here (see \cite[Theorem 1.3]{HIKRigidity} )
\begin{theorem} \label{thm: pld spec rigidity}
Let $B=\bar B(0,1)\setminus \bar B(0,R)\subset\R^3$, $R\geq0$, be an
annulus (or a ball if $R=0$).  Fix $\eps>0$ and let $a_s$,
$s\in(-\eps,\eps)$, be a $C^\infty$ function $[R,1]\to(0,\infty)$
with $e^{a_s/2}$
satisfying the Herglotz condition and the countable conjugacy
condition and depending $C^1$-smoothly on the parameter $s$.
Assume also that the length spectrum satisfies the geometric spreading injectivity condition, and assume that the periodic broken rays satisfy the clean intersection hypothesis.
If
$R=0$, we assume that all odd order derivatives of $e^{a_s}$ vanish at
$0$.  If $spec(L_{a_s,b})=spec(L_{a_0,b})$ for all $s\in(-\eps,\eps)$,
then $a_s=a_0$ for all $s\in(-\eps,\eps)$.
\end{theorem}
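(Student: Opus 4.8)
The plan is to reduce the statement to the length-spectral rigidity already proved in \cite{HIKRigidity}, exploiting the fact that the fixed density $b$ sits strictly below the principal order of $L_{a_s,b}$ and is therefore invisible to the length spectrum, which is the only feature of the spectrum used there. First I would note that, for each $s$, $L_{a_s,b}$ is formally self-adjoint and nonpositive on $L^2(B;e^{b-a_s}\,dx)$, since $\int_B (L_{a_s,b}u)\,\bar v\,e^{b-a_s}\,dx = -\int_B e^{b}\,\nabla u\cdot\nabla\bar v\,dx$; with the prescribed boundary conditions it has discrete spectrum $\{\lambda_j(s)\}\subset(-\infty,0]$. Set $P_s:=\sqrt{-L_{a_s,b}}$, a first-order self-adjoint pseudodifferential operator whose principal symbol is $p_s(x,\xi)=e^{a_s(\abs x)/2}\abs{\xi}$ --- independent of $b$, and equal to the Hamiltonian generating a reparametrization of the geodesic flow of $g_{a_s}=e^{-a_s}e$.

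Next I would pass to the even wave trace $w_s(t):=\operatorname{tr}\cos(tP_s)=\sum_j\cos\!\big(t\sqrt{-\lambda_j(s)}\big)$, a tempered distribution on $\R$ determined by $\spec(L_{a_s,b})$ alone. By the microlocal trace formula on a manifold with boundary --- propagation of singularities for the boundary wave group, under the clean intersection hypothesis; see \cite{HIKRigidity} --- one has $\operatorname{singsupp} w_s\subseteq\{0\}\cup\{\pm\ell:\ell\in\lsp(B,g_{a_s})\}$, the set of lengths of periodic broken geodesics; moreover the clean intersection hypothesis and the geometric spreading injectivity condition together ensure that the leading singular contribution at each such $\ell$ is nonzero and that distinct orbits of equal length cannot cancel, so the inclusion is an equality. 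The one point genuinely beyond \cite{HIKRigidity} is the observation that both the location of these singularities and their non-vanishing are governed by $p_s$, i.e. by the broken bicharacteristic flow of $g_{a_s}$ only: the subprincipal term of $P_s$ created by $b$ (and by the lower-order part of $a_s$) affects only the amplitudes and unimodular phases in the expansion of $w_s$ near each $\ell$, not $\operatorname{singsupp} w_s$.

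With this in hand, $\spec(L_{a_s,b})=\spec(L_{a_0,b})$ forces $w_s=w_0$, hence $\lsp(B,g_{a_s})=\lsp(B,g_{a_0})$ for all $s\in(-\eps,\eps)$, and from here the argument of \cite{HIKRigidity} applies verbatim: the Herglotz condition, the countable conjugacy condition, the radial conformally Euclidean structure on the ball/annulus, and (when $R=0$) the vanishing of the odd-order derivatives of $e^{a_s}$ at the origin together imply that a $C^1$ family of such metrics with $s$-independent length spectrum is itself $s$-independent, so $a_s\equiv a_0$. The hard part is thus not the deformation-rigidity mechanism, which is imported wholesale, but the verification that the $b$-generated subprincipal correction is harmless --- that it neither shifts the singularities of the wave trace nor annihilates the leading coefficient at a clean length $\ell$. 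This is the expected statement that the singular support of a wave trace depends only on the principal symbol, but I would still want to check explicitly that the $b$-dependent factor in that leading coefficient is unimodular, so that it can produce no cancellations beyond those already excluded by the geometric spreading injectivity condition; once that is confirmed, the theorem follows by citing \cite{HIKRigidity}.
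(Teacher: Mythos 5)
Your proposal follows essentially the same route as the paper, which does not reprove this theorem but imports it from \cite{HIKRigidity}: recover the length spectrum from the wave trace, observe that the subprincipal coefficient $b$ affects neither the singular support nor the non-vanishing of the leading singularity (the trace formula in \cite{HIKRigidity} was already stated with the density included, so the verification you flag as the ``one point genuinely beyond'' the earlier work is in fact already contained there), and then invoke the length-spectral deformation rigidity verbatim. Your remaining worry about the unimodularity of the $b$-dependent factor is resolved exactly as you expect, since the subprincipal symbol of the self-adjoint operator is real; no further argument is needed.
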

The proof of the theorem only relied on the principal symbol of the elliptic operator $ {e^{a-b}} \nabla \cdot e^{b} \nabla$, which is the same as that of the Laplace-Beltrami operator for the metric $e^a dx$. Hence, the theorem continues to hold if $b$ is allowed to vary as well with $s \in (-\eps, \eps)$. We now consider a variation $(a_s,b_s)$; if $ e^{a_s/2} = c_s$ satisfies all the assumptions in Theorem \ref{thm: pld spec rigidity}, then we say that $(a_s,b_s)$ are \emph{admissable} radial profiles. For such profiles, the above theorem continues to hold and we have

\begin{theorem}
\label{thm:LO}
Let $a_s(r)$ and $b_s(r)$ be two admissable radial profiles.
If $\spec(L_{a_s,b_s}) = \spec(L_{a_0,b_0})$ for all $s \in (\eps,\eps)$, then $a_s$ is independent of $s$.
\end{theorem}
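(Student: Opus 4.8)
The plan is to deduce Theorem~\ref{thm:LO} from Theorem~\ref{thm: pld spec rigidity} by checking that the proof of the latter is insensitive to the coefficient $b$. First I would recall the mechanism of \cite{HIKRigidity}: one attaches to $\spec(L_{a_s,b_s})$ its regularized wave trace (a tempered distribution in $t$ built from the frequencies $\sqrt{-\lambda_j}$), whose singular support is contained in the length spectrum of periodic broken geodesics of the metric $g_{a_s} = e^{-a_s(|x|)}e$. The \emph{locations} of these singularities are dictated solely by the broken bicharacteristic flow of the principal symbol $p(x,\xi) = -e^{a_s(|x|)}|\xi|^2$, in which $b_s$ does not appear; the first-order and zeroth-order terms of $L_{a_s,b_s}$, which is where $b_s$ sits, only enter the subprincipal transport equation and hence perturb the amplitude and Maslov factors, i.e. the leading coefficients of the singularities, never their presence. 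Consequently the assumption $\spec(L_{a_s,b_s}) = \spec(L_{a_0,b_0})$ forces the length spectrum of $g_{a_s}$ to coincide with that of $g_{a_0}$ for every $s$.

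Second, I would feed this into the geometric part of the argument from \cite{HIKRigidity} exactly as before. The geometric spreading injectivity condition ensures no cancellation between distinct broken rays of the same length, so the leading singularity data is faithfully recorded; the clean intersection hypothesis lets one read the length spectrum off the wave trace; and the Herglotz together with the countable conjugacy conditions then upgrade equality of length spectra to equality of radial sound speeds, giving $e^{a_s/2} = e^{a_0/2}$, that is $a_s = a_0$ for all $s$. When $R = 0$ the vanishing of all odd-order derivatives of $e^{a_s/2}$ at the origin is used, just as in \cite{HIKRigidity}, to control the behavior at $r=0$. Since \emph{admissible radial profile} was defined precisely so that $c_s = e^{a_s/2}$ meets every hypothesis of Theorem~\ref{thm: pld spec rigidity}, no new geometric input is needed.

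The one point that must be verified to make the above rigorous --- and the only mild obstacle --- is that introducing an $s$-dependent $b_s$ does not break the spectral-theoretic setup behind the wave trace. Although $L_{a_s,b_s}$ is not symmetric on $L^2(B, dx)$, it is self-adjoint with discrete spectrum on the weighted space $L^2(B, e^{a_s - b_s}\,dx)$ --- equivalently, conjugation by $e^{b_s/2}$ turns $-L_{a_s,b_s}$ into a Schr\"odinger-type operator with metric $g_{a_s}$ and a smooth potential depending on $b_s$ --- so its Dirichlet eigenvalues are real, nonnegative, and have the same Weyl asymptotics as for $b_s \equiv 0$, and the trace formula and microlocal analysis of \cite{HIKRigidity} apply verbatim with $b$ replaced by $b_s$. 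Once this is in hand, the proof of Theorem~\ref{thm: pld spec rigidity} runs line for line with the fixed $b$ replaced by the varying $b_s$, and the conclusion $a_s = a_0$ follows. I do not anticipate any genuinely new difficulty beyond this bookkeeping.
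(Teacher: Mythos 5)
Your proposal is correct and follows essentially the same route as the paper: both reduce to the proof of Theorem~\ref{thm: pld spec rigidity} in \cite{HIKRigidity} by observing that the wave-trace singularities, and hence the recoverable length spectrum, depend only on the principal symbol $-e^{a_s}|\xi|^2$ and not on $b_s$, after which the geometric part of that proof recovers $a_s$ unchanged. The paper's version is terser (it simply notes that the trace formula in \cite{HIKRigidity} was already proved with a general density included and that $\lsp(L_{a_s,b_s})=\lsp(L_{a_s,b})$), while your remark on self-adjointness in $L^2(B,e^{a_s-b_s}\,dx)$ matches the weighted inner product the paper uses later.
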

\begin{proof}
This follows directly from the proof of Theorem \ref{thm: pld spec rigidity} in \cite{HIKRigidity}. The proof relied on two components. The first was a trace formula that recovers the length spectrum from the spectrum. The operator considered in that proof had the density $\rho$ included as well for greater generality so the trace formula  \cite[Proposition 14]{HIKRigidity} holds for $L_{a,b}$. The length spectrum is independent of density so that $\text{lsp}(L_{a_s,b_s}) = \text{lsp}(L_{a_s,b})$. Thus, we can recover the length spectrum from the spectrum with the same argument there. The rest of the proof only relied on the length spectrum to recover $a$, and did not depend on the density. Hence, $a_s$ is independent of $s$.
\end{proof}

We can extend Theorem \ref{thm:LO} to show that if the ``derivatives of the spectrum vanish'', then so do the $s$-derivatives of $a_s$ at $s = 0$.
Denote $\delta^k = \frac{d^k}{ds^k}|_{s=0}$ as the operator which takes derivatives in $s$ and then restricts to $s=0$. If $f_s$ denotes a family of distributions or constants, it will be convenient to denote
\[
f' : = \delta f \qquad f'' := \delta^2 f
\]
where we left out the subscript $s$.

Denote $\Lambda_s:= \{\lambda_s^1, \lambda_s^2, \dots \}$ be the spectrum of $L_s$ and
$\Lambda'$ as the first order perturbation. That is, the collection $\delta \lambda^k = \frac{d}{ds}|_{s=0} \lambda^k_s$ for each $k$. 
We have the following corollary which is proved in \ref{app:thm4}.
\begin{theorem}\label{t: derivative rigidity}(corollary from proof of above theorem)
\label{col:LO}
Let $a_s(r)$ and $b_s(r)$ be two radial profiles, such that $(a_0(r), b_0(r))$ are admissable.
If $\Lambda' =\{ 0\}$, then $a'=0$.
\\
\end{theorem}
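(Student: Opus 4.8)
The plan is to linearize at $s=0$ the argument behind Theorem~\ref{thm:LO} (equivalently \cite[Theorem~1.3]{HIKRigidity}), which recovers $a_s$ from $\spec(L_s)$ in two stages: a wave-trace formula recovering $\lsp(L_s)$ from $\spec(L_s)$, and a Herglotz/Abel-type inversion recovering $c_s:=e^{a_s/2}$ from $\lsp(L_s)$, with the geometric spreading injectivity and clean intersection hypotheses guaranteeing that no singularity in the trace is lost. Since $(a_0,b_0)$ is admissable and everything varies smoothly in $s$, each Dirichlet eigenvalue $\lambda^k_s$ and each length $\ell(s)$ of a periodic broken ray of $g_{a_s}$ may be followed as a $C^1$ function of $s$ near $0$. \emph{Step 1: the first variation of the wave trace vanishes.} Let $W_s(t)=\sum_k e^{it\sqrt{\lambda^k_s}}$ (counted with multiplicity), the distributional trace underlying \cite[Proposition~14]{HIKRigidity}, whose singular support in $t$ is contained in $\pm\lsp(L_s)$. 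Differentiating in $s$ and evaluating at $s=0$,
\[
\delta W_s \;=\; \sum_k it\,\Big(\tfrac{\delta\lambda^k}{2\sqrt{\lambda^k_0}}\Big)\,e^{it\sqrt{\lambda^k_0}} \;=\; 0,
\]
because $\Lambda'=\{0\}$ means $\delta\lambda^k=0$ for every $k$; the interchange of $\delta$ with the distributional sum is routine, the Schwartz decay of test functions dominating the polynomial growth of $\partial_s\lambda^k_s$.

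\emph{Step 2: the length spectrum is infinitesimally rigid.} Fix $\ell_0\in\lsp(L_0)$ and a corresponding periodic broken ray of length $\ell(s)$. By the clean intersection hypothesis, and by the geometric spreading injectivity condition which in \cite{HIKRigidity} ensures the leading symbol is not cancelled, the contribution of this ray (and the symmetry-related rays of the same length) to $W_s$ near $t=\ell_0$ has the form $c(s)\,\mathcal I\big(t-\ell(s)\big)$ with a fixed conormal singularity $\mathcal I$ at $0$ and $c(0)\neq 0$, modulo strictly less singular terms. Hence near $t=\ell_0$,
\[
\delta W_s(t) \;=\; c'(0)\,\mathcal I(t-\ell_0)\;-\;c(0)\,\ell'(0)\,\mathcal I'(t-\ell_0)\;+\;(\text{strictly less singular}).
\]
If $\ell'(0)\neq 0$ the term $\mathcal I'(t-\ell_0)$ is one order more singular than everything else and cannot cancel; since $\delta W_s\equiv 0$ by Step~1, this forces $\ell'(0)=0$. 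Thus every periodic-broken-ray length of $g_{a_s}$ is stationary at $s=0$.

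\emph{Step 3: from $\delta\ell=0$ to $a'=0$.} In \cite{HIKRigidity} the profile $c_s$ is reconstructed from $\lsp(L_s)$ by writing the lengths of the relevant families of periodic broken rays as a travel-time transform of $c_s$ — an Abel-type transform in the radial variable, well-posed under the Herglotz condition — and inverting it. Differentiating this reconstruction in $s$: the vanishing of $\ell'(0)$ for all of those rays says exactly that the corresponding linearized (weighted Abel-type) transform kills $a'=\delta a$, and injectivity of that transform — the linearization of the injectivity step used in \cite{HIKRigidity} — gives $a'=0$. When $R=0$ one also uses, as there, the vanishing of all odd-order derivatives of $e^{a_s/2}$ at $0$ to handle the center of the ball.

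The main obstacle is Step~2: one must carry out the order-of-singularity comparison rigorously when several periodic rays share the length $\ell_0$ (so that a priori only a $c_j(0)$-weighted combination of the $\ell_j'(0)$ is seen to vanish) and in the presence of the degeneracies allowed by the countable conjugacy and clean intersection hypotheses, and one must check that the no-cancellation mechanism of \cite{HIKRigidity} — which yields $c(0)\neq 0$ — survives differentiation in $s$. A secondary point is to confirm that the length-to-profile inversion in \cite{HIKRigidity} is arranged so that its $s$-linearization is itself injective on radial functions, not merely that the underlying nonlinear map is injective; given its Abel-type nature this should follow, but it should be stated explicitly.
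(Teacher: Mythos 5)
Your Steps 1 and 3 match the paper's argument, and your Step 2 is exactly the ``formal proof'' that the paper itself writes down first --- and then explicitly rejects as insufficient under the stated hypotheses. The gap is not the multiplicity/cancellation issue you flag at the end, but something more basic: your Step 2 differentiates the parametrized trace formula $c(s)\,\mathcal I\big(t-\ell(s)\big)$ in $s$, which presupposes that this conormal expansion of $\operatorname{Tr}(G_s)$ is valid for an interval of $s$ around $0$ and depends differentiably on $s$. That expansion is only established in \cite{HIKRigidity} for \emph{admissable} profiles (Herglotz, countable conjugacy, clean intersection, geometric spreading injectivity), and the theorem here assumes admissibility only of $(a_0,b_0)$, not of $(a_s,b_s)$ for $s\neq 0$. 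For $s\neq 0$ the rays of $g_{a_s}$ could be trapped, the intersections could fail to be clean, or the length spectrum could degenerate, so $\operatorname{Tr}(G_s)$ near $t=\ell_0$ need not have the form you differentiate. The paper states this explicitly: the result ``would be automatic from previous results if $(a_s,b_s)$ was admissable,'' but ``that is too strong of an assumption.''

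The paper's actual proof circumvents this with a device from \cite{HZellipse}: instead of differentiating the trace \emph{formula}, one computes $\frac{d}{ds}\big|_{s=0}\operatorname{Tr}\,\partial_t G_s(t)$ directly from the eigenvalue perturbation formula $\lambda_j'=\int c'\,\abs{\Psi_j}^2\rho\,dx$ and recognizes the resulting sum as the trace of a modified kernel
$\tilde G(t,x,x_0)=-\tfrac{t}{2}\operatorname{Re}\sum_j \frac{e^{i\lambda_j t}}{i\lambda_j}\,c'(x)\Psi_j(x)\Psi_j(x_0)$,
every ingredient of which except the weight $c'$ belongs to the \emph{unperturbed} operator $L_0$. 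The singularity analysis of \cite{HIKRigidity} is then applied only at $s=0$, where admissibility holds, and yields the leading coefficient $\sum_{[\gamma]\in F_T} C_{[\gamma]}\int_\gamma c'\,ds$ at each length $T$ in one step --- there is no intermediate claim $\ell'(0)=0$ for individual rays. From the vanishing of these weighted ray integrals one passes to the Abel transform and concludes $c'=0$ as in your Step 3. To repair your argument you would either have to add the hypothesis that $(a_s,b_s)$ is admissable for all $s$ (which the paper says defeats the purpose of the corollary) or adopt this trace-of-a-weighted-kernel reformulation.
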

The proof of the above theorem would be automatic from previous results if $(a_s,b_s)$ was admissable. However, that is too strong of an assumption, but the result still remains true and requires a careful argument that we provide in the appendix that uses a technique in \cite{HZellipse}.
Notice that no assumptions are needed on $b$.
We only needed to make assumptions on the principal behaviour to prove the theorem.

\subsection*{Density of squared eigenfunctions}

We can use the previous theorem to show that the squares of eigenfunctions of $L$ have a certain density property among smooth radial functions. We need several preliminaries to prove such a theorem. Let $a_s(r)$ be a smooth variation corresponding to wave speeds $c_s$, and keep $b$ fixed.
Denote $a'_s(r)\coloneqq\partial_sa_s(r)$ and consider the family of operators $L_s = L_{a_s,b}$. Recall, the eigenvalues of $L_s$ are denoted $\lambda_s^k$ with corresponding eigenspaces $E_s^k\subset C^\infty(B)$, indexed with $k\in\N$. Denote $L = L_{a_0,b}$, with eigenfunctions $\{\psi_j\}$, and let $\psi$ denote a generic eigenfunction.
We will now compute $\Lambda'$.

 The operators $L_s$ are self-adjoint with respect to the metric $e^{b-a_s}dx$. Let $\phi_s$ be a family of eigenfunctions of $L_s$ depending on the parameter $s$. We have with $L_s \phi_s = \lambda_ s \phi_s$ and differentiating both sides of this equation with respect to $s$ and setting $s =0$ (with $\lambda:=\lambda_0, \phi := \phi_0)$ gives
\[
a' L \phi + L\phi '= \lambda'\phi + \lambda \phi',
\]
so
\[
a' \lambda \phi + L\phi'= \lambda'\phi + \lambda \phi'.
\]

Pair both sides of the above equation with $\phi$ using the inner product with the volume form $e^{b-a} dx$, and using the self-adjointness of $L$ we compute
\[
\lambda \langle a' \phi, \phi \rangle
= \lambda ' ||\phi||^2
= \lambda'
\]
if $\phi$ is normalized. In the above formula we used our assumption that the degeneracy of the spectrum of $P_0$ is only due to spherical symmetry. In general, if $\Pi_\lambda$ denotes the orthogonal projection to the $\lambda$ eigenspace, then $\lambda'$ is an eigenvalue of $\Pi_\lambda L' \Pi_\lambda^*$ and $\phi$ would need to be chosen more carefully to correspond with eigenvectors of this matrix. However, due to the spherical symmetry, this matrix is a multiple of the identity so $\phi$ can be any $\lambda$ eigenfunction.
Hence, we derive
\begin{equation}\label{e: lambda'_k formula}
\delta \lambda^k
= \delta \lambda^k \int a'(r) |\psi_k(r)|^2 e^{b-a}\ dx
:= \delta \lambda^k
\iip{a'}{\abs{\psi_k}^2},
\end{equation}
where we have defined the inner product $\iip{\cdot}{\cdot}$ in the last equality.

We can now prove the following result for the density of squares of radial eigenfunctions:

\begin{theorem}
\label{thm:density-of-squares}
If $f\colon B\to\R$ is a smooth radial function so that
\begin{equation}\label{e: orthogonal to squares}
\iip{f}{\abs{\psi}^2}
=
0
\end{equation}
for all eigenfunctions $\psi$ of $L$, then $f=0$.
\end{theorem}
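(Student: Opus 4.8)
The strategy I would pursue is to reduce the density-of-squares statement to Theorem \ref{col:LO} (the derivative-rigidity statement) by a Hahn–Banach / duality argument: the annihilator of $\{|\psi_k|^2\}$ in an appropriate space should be shown to be trivial by realizing any candidate annihilator $f$ as the ``first-order perturbation'' induced by a deformation $a_s = a_0 + sf$. Concretely, I would take $f$ to be a smooth radial function satisfying \eqref{e: orthogonal to squares}, set $a_s := a_0 + s f$, keep $b_s := b_0$ fixed, and consider the family of operators $L_s = L_{a_s, b_0}$. Since $f$ is smooth, radial, and vanishes nowhere in a way that matters only through $a_s$ still satisfying positivity for $|s|$ small, this is a legitimate $C^\infty$ variation; I would remark that $a_0$ (hence $a_s$ for small $s$) is admissible by hypothesis.

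Next I would invoke formula \eqref{e: lambda'_k formula}: for this particular variation, $a' = \delta a_s = f$, so
\[
\delta \lambda^k = \delta \lambda^k \, \iip{f}{|\psi_k|^2} = \delta \lambda^k \cdot 0 = 0
\]
for every $k$. Hence $\Lambda' = \{0\}$. Now Theorem \ref{col:LO} applies — its hypotheses require only that $(a_0, b_0)$ be admissible, which holds — and yields $a' = 0$, i.e. $f = \delta a_s = 0$. This is exactly the desired conclusion. The logical shape is therefore: orthogonality to all squared eigenfunctions $\Rightarrow$ the spectrum has vanishing first-order derivative along the corresponding deformation $\Rightarrow$ by derivative-rigidity the deformation direction $f$ is itself zero.

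The point that requires the most care — and the reason Theorem \ref{col:LO} was stated separately with its weakened admissibility hypothesis — is precisely that the perturbed profile $a_s = a_0 + sf$ need not remain admissible for $s \neq 0$: the Herglotz condition, countable conjugacy condition, geometric spreading injectivity, and clean intersection hypotheses are open-ended geometric conditions that a generic smooth perturbation $f$ may destroy. This is why one cannot simply quote Theorem \ref{thm:LO}; one must use the appendix argument behind Theorem \ref{col:LO}, which only needs the \emph{unperturbed} profile $(a_0,b_0)$ to be admissible together with the vanishing of $\Lambda'$. I would also double-check the measure-theoretic/functional-analytic bookkeeping in \eqref{e: lambda'_k formula} — that the eigenfunctions $\psi_k$ here range over a full orthonormal basis of radial eigenfunctions and that the inner product $\iip{\cdot}{\cdot}$ with weight $e^{b-a}\,dx$ is the one appearing in the hypothesis \eqref{e: orthogonal to squares} — but this is routine once the deformation argument is in place. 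Finally, I would note that no hypothesis on $b$ is used, consistent with the remark following Theorem \ref{col:LO}, since the deformation keeps $b$ frozen and only the principal symbol moves.
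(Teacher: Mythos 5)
Your proposal is correct and follows essentially the same route as the paper: set $a_s = a_0 + sf$ with $b$ fixed, deduce $\Lambda' = \{0\}$ from \eqref{e: lambda'_k formula} (note the right-hand side there should read $\lambda^k\iip{a'}{\abs{\psi_k}^2}$, a typo that does not affect the conclusion), and invoke Theorem~\ref{t: derivative rigidity} to conclude $f=0$. Your observation that the perturbed profile need not remain admissible for $s\neq 0$ --- which is exactly why Theorem~\ref{t: derivative rigidity} is stated under the weaker hypothesis that only $(a_0,b_0)$ is admissible --- matches the paper's own remark and is the only subtle point in the argument.
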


\begin{proof}
Let $a_s = a + sf$. First, \eqref{e: orthogonal to squares} implies by our computation \eqref{e: lambda'_k formula} for $\delta \lambda^k$ that $\Lambda' = \{0\}$. By theorem \ref{t: derivative rigidity}, $f  = 0$.
\end{proof}

\section{Proof of Theorem \ref{t: intro main theorem acoustic)}}

In order to prove Theorem \ref{t: intro main theorem acoustic)}, 
we will apply this density of squares to obtain the spectral rigidity of $b_s$.
(We assume that $b_s$ is known on the boundary at $r=1$).

\begin{proof}[Proof of Theorem \ref{t: intro main theorem acoustic)}]
Let $(a_s(r)$ and $b_s(r))$ be two admissal radial profiles.
For the Neumann spectrum, additionally assume that $\p_r b_s(1)$ is independent of $s$.
By the assumption that the spectrum of $L_{a_s,b_s}$ is the same for all $s$, then $a_s$, 
we apply theorem~\ref{thm:LO} to conclude that $a_s=a_0$ for all $s$.

Let us then consider the perturbed operator $L'=\partial_s|_{s=0} L_{a,b_s}$, which now only depends on the perturbation $b'$.
It operates as
\begin{equation}
L'u
=
e^a\ip{\nabla b'}{\nabla u}.
\end{equation}
Note that $\Lambda'=\{0\}$ since $\text{spec}(L_s)$ is independent of $s$. Spectral perturbation theory says that the spectrum of $L'$ has zero derivative if and only if
\begin{equation}
0
=
\iip{\psi}{L'\psi}
=
\int_B \psi^*e^a\ip{\nabla b'}{\nabla\psi} e^{b-a} \ dx
=
\frac12\int_B e^b\ip{\nabla b'}{\nabla\abs{\psi}^2} \ dx
\end{equation}
for all eigenfunctions $\psi$ of $L$.
Integrating by parts and using that $\psi$ vanishes at the boundary in the Dirichlet case or using that $\p_r b'(1) = 0$ in the Neumann case, we find
\begin{equation}
\int_B \abs{\psi}^2\Div(e^b\nabla b') \ dx
=
0.
\end{equation}

By theorem~\ref{thm:density-of-squares}, we obtain
\begin{equation}
\Div(e^b\nabla b')
=
0.
\end{equation}
Thus, 
\[
0 = \int_B \Div(e^b \nabla b') b' \ dx
= -\int_B e^b |\nabla b'|^2 \ dx. 
\]
Since $b'$ is radial, we conclude $\p_r b' = 0$, so that $b' \equiv 0$ since we assumed $b'(1) =0$.

 For the second order, we similarly have $\lambda '' = 0$.
 Since we showed $\p_r b' =0$, we conclude that $L' = 0$ so that we get the expression
 $\lambda'' = \langle L'' \psi , \psi \rangle$
 where $L'' = \frac{d^2}{ds^2}|_{s=0}L_{a_0,b_s}$. Usually, the second order perturbation $\lambda''$ has a more complicated form involving $L'$, but since we already showed $L' =0 $, we get this simple expression for $\lambda''$.
 Hence, we get
 \[
 \langle \psi, L'' \psi \rangle = 0,
 \]
 where since $a_s$ is independent of $s$, 
 \[
 L'' = e^a \nabla b'' \cdot \nabla.
 \]
Hence, the same argument shows $\p_r b'' =0$. Continuing, we obtain $\p_r b^{(j)} = 0$ for each $j$. \end{proof}

\subsection*{Acknowledgements
} MVdH was supported by the Simons Foundation under the MATH + X program, the National Science Foundation under grant DMS-1815143, and the corporate members of the Geo-Mathematical Imaging Group at Rice University.
JI was supported by the Academy of Finland (projects 332890 and 336254).

\section{Declarations}
\subsection*{Funding}
MVdH was supported by the Simons Foundation under the MATH + X program, the National Science Foundation under grant DMS-1815143, and the corporate members of the Geo-Mathematical Imaging Group at Rice University.
JI was supported by the Research Council of Finland (Flagship of Advanced Mathematics for Sensing Imaging and Modelling grant 359208; Centre of Excellence of Inverse Modelling and Imaging grant 353092; and other grants 351665, 351656, 358047) and the V\"ais\"al\"a project grant by the Finnish Academy of Science and Letters.

\subsection*{Conflict of interest/Competing interests}
{\bf Financial interests:} The authors declare they have no financial interests.
\\

\noindent {\bf Non-financial interests:} The authors declare they have no non-financial interests.

\subsection*{Availability of data and material} Not applicable

\subsection*{ Code availability} Not applicable

\appendix

\section{History of the problem}\label{a: history}
The general problem of finding a manifold from spectral data is old; see e.g. the famous question~\cite{Kac66} by Mark Kac in 1966.
In 1972 he showed that the spectrum of a rotational solid allows one to determine lateral surface area~\cite{Kac72}.
Another classic result is that of Guillemin and Kazhdan~\cite{GUILLEMINtwomanifold} connecting spectral rigidity with periodic ray transforms on negatively curved manifolds.
One approach to spectral problems of this nature is Weyl's law~\cite{Ivrii100years} and its many variants.
Semiclassical variants of the law have proven useful for inverse problems since recovering the metric or a potential from a parameterized spectrum associated to a semiclassical operator depending on a parameter $h$ is much simpler than from a single spectrum, although the potentials are allowed to be more complicated; 
see e.g. \cite{Verdire2011ASI, VerdireAIF_2007__57_7_2429_0,Gurarie}.
For a more detailed description of progress on similar problems to recover a single parameter from spectral data, we refer the readers to~\cite{Gordon2005, GordonSurvey,DatchevSurvey}.

On manifolds with boundary or ones with symmetry in the metric, there are many past results on recovering a single coefficient. See for example \cite{Bru84,stroock1973, zelditch1998,Gurarie,HZellipse,zelditch1998}.
Fewer results exist on recovering multiple coefficients from a single spectrum. 
A particularly relevant paper is by Barcilon in \cite{BarcilonSpheroidal}. Barcilon shows how to reconstruct $n$ coefficients of a $n+1$ order ordinary differential operator, but it requires $n+1$ distinct spectra associated with $n+1$ distinct boundary conditions. It is meant to illustrate the subtlety of recovering the density, bulk modulus, and shear modulus from spheroidal modes. 

A key result in this letter was showing the squares of eigenfunctions form a dense set within a particular $C^\infty$ function space. 
 As described in \cite{YannickSquaredEfunctions},
squared eigenfunctions appear quite naturally in
the study of the controllability of the bilinear Schr\"odinger equation. The $k$th eigenvalue $\lambda_k^\epsilon$ of $\Delta + \epsilon V: H^2(\Omega) \cap H^1_0(\Omega) \to L^2(\Omega)$ is analytic with respect to $\epsilon$ and satisfies
\[
\frac{d}{d\epsilon}|_{\epsilon = 0} \lambda_k^\epsilon = \int V(x) \phi_k(x)^2 dx
\]
where $\phi_k$ is the corresponding eigenfunction for $-\Delta$. Hence, the linear independence and density of the $(\phi^2_n)_{\{n \in \mathbb N\}}$ within a certain function space clearly plays a role in the study of the
size of the family of potentials $V$ for which the spectrum has some prescribed
property. An additional novelty of this work is we show that the squares of eigenfunctions reveal information about the non-principal coefficient $b$, and are connected to the geometry of $(B, e^a dx)$ via the length spectrum.

\section{Proof of Theorem \ref{t: derivative rigidity}}
\label{app:thm4}

\begin{proof}
Since the proof involves a wave propagator, let us write $c_s = e^{a_s/2}$ to represent the wave speed, and $\rho_s = e^{b_s-a_s}$.
Let us first show the formal proof assuming each $c_s$ is admissable, and then do the rigorous justification when only $c_0$ is admissable. Let $G_s$ denote the Green's function for the wave equation with Neumann boundary conditions, and we have by definition, the distribution in the variable $t$
\begin{equation}\label{e: tr(gs)}
\text{Tr}(G_s)(t) = \sum_k \cos(t\sqrt{-\lambda_s^k} ),
\end{equation}
which implies
\[
\delta \text{Tr}(G_s)(t)
= \sum_k \frac{\delta \lambda^k}{2\sqrt{-\lambda_0^k}}t\sin( t\sqrt{-\lambda_0^k}),
\]
and our assumption implies this quantity is $0$.
On the other hand, for a differentiable family $T_s$ in the length spectrum ($\text{lsp}_s$) and $J$ a neighborhood of $T_s$ isolated from the rest of the length spectrum, the trace formula of \cite[Proposition 2.3]{HIKRigidity}, \emph{if} each $c_s$ was admissable, gives
\begin{equation}\label{e: tr(G) trace formula}
\text{Tr}(G_s)|_J
= a_s (t-T_s+i0)^{-5/2} + O((t-T_s+i0)^{-3/2}),
\end{equation}
where $a_s$ is independent of $t$. The differentiable families $\{ T_s\}$ in the length spectrum are constructed in \cite[section 4]{HIKRigidity}. The geometric spreading injectivity condition guarantee that $a_s$ does not vanish.
Taking a first order perturbation gives
\begin{equation}\label{e: delta Tr(G) formula}
0 = \delta\text{Tr}(G)|_J
= a_0 (-d)(t-T_0+i0)^{-7/2}\delta T + O((t-T_0+i0)^{-5/2}).
\end{equation}
Hence, we conclude that $\delta T = 0$.

Next, let $\phi\colon(-\delta,\delta)\to(R,1)$ for any $\delta>0$ be a $C^1$ function
satisfying $\phi(s)\in P_s$ for all $s\in(-\delta,\delta)$, where $P_s$ is the set of radii corresponding to a periodic broken geodesic.
Let $\gamma_0: [0,T_0] \to M$ be the periodic broken geodesic of length $T_0$ and radius $\phi(0)$. Then $T_s = T(\phi(s))$ is a $C^1$ family of lengths of periodic orbits and we have from \cite[Lemma 4.13]{HIKRigidity}
that
\[
2\delta T
= \int_0^{T_0}
\frac{d}{ds}c_s^{-2}(\gamma_0(t))\Large|_{s=0} \ dt.
\]
If $f\colon M\to\R$ is a continuous radially symmetric function (identified as a
function $f\colon(R,1]\to\R$) and $r\in P_0$, then the integral of $f$ over any
periodic geodesic (with respect to sound speed $c_0$) of radius $r$ is an integer multiple of
\begin{equation}\label{eq:abel-transform}
\int_r^1\frac{f(r')}{c(r')}\left(1-\left(\frac{rc(r')}{r'c(r)}\right)^2\right)^{-1/2}\ d
r'.
\end{equation}
\\ \\
Since we showed $\delta T = 0$, this implies that the variation of the wave
speed, $f=\left.\frac{d}{ds}c^{-2}_s\right|_{s=0}\colon M\to\R$, integrates to zero
over all periodic geodesics of radius $r$.
This function $f$ is radially symmetric, so we can think of it as a function
$(R,1]\to\R$.
By \eqref{eq:abel-transform} we know that
\begin{equation}
\label{eq:abel=0}
\int_r^1\frac{f(r')}{c(r')}\left(1-\left(\frac{rc(r')}{r'c(r)}\right)^2\right)^{-1/2} \ dr'
=
0.
\end{equation}
Equation~\eqref{eq:abel=0} is true for a dense set of radii
$r\in(R,1)$ as shown in \cite[proof of Theorem 4.6]{HIKRigidity}, so it follows from
\cite[lemma 4.14]{HIKRigidity} that in fact $f$ vanishes identically.
We have found that $\delta c =0$ so $\delta a = 0$.

The above proof works if each member of the family $c_s$ was admissable. Let us modify the proof above to show that we get the same formulas even if only $(c_0,\rho_0)$ are admissable using an argument in \cite{HZellipse}. It will be convenient to relabel the eigenvalues as $-\lambda^2_j(s)$ where $L_s \Psi_j = -\lambda_j^2 \Psi_j.$
Recall that
\[
\text{Tr} \ \p_t G_s(t) = \sum_j \cos(t \lambda_j(s)),
\]
so by the analogous computation in \cite{HZellipse}, we have
\[
\frac{d}{ds}|_{s=0}
\text{Tr} \ \p_t G_s(t)
= -t \sum_j \frac{\sin(t\lambda_j)}{2\lambda_j} \lambda_j'
= -\frac{t}{2} \sum_j \int_B \frac{\sin(t\lambda_j)}{\lambda_j}  c'(x) |\Psi_j(x)|^2 \rho(x)\ dx
\]
where we note that besides $c'$, each quantity above is associated to the unperturbed operator $P_0$. Notice that the right hand side is exactly the trace of 
\[\tilde G(t,x,x_0) = -\frac{t}{2} \text{Re}\sum_j \frac{e^{i\lambda_j t}}{i\lambda_j} c'(x)\Psi_j(x)\Psi_j(x_0) 
\]
Hence, the identical calculation in \cite{HIKRigidity} used to compute $\text{Tr}(G)$ applies to compute $\text{Tr}(\tilde G)$ where the only difference is that $c'(x)$ appears in each integral in the calculations there. This yields that near a singularity $T$ of $\text{Tr} 
 \tilde G(t)$, if $F_T$ denotes the set of equivalence classes of periodic broken rays of length $T$,
\[
\frac{d}{ds}|_{s=0}
\text{Tr}\ \p_t G_s(t) \sim -\frac{t}{2} \sum_{[\gamma] \in F_T} C_{[\gamma]} \left(\int_\gamma c' \ ds \right)(t-T+i0)^{-5/2}
\]
modulo lower order singularities, where $C_\gamma$ is the nonzero coefficient appearing in front of $(t-T_{[\gamma]}+i0)^{-5/2}$ in \cite[proposition 2.3]{HIKRigidity} that does not depend on $c'$ nor $\rho'$, and without the factor $T^\sharp_\gamma/N_\gamma$. As described in \cite{HIKRigidity}, the assumptions on $c_0$ ensure that the sum above does not vanish. Since 
\[
\int_\gamma c' \ ds = \int_0^T \frac{d}{ds}|_{s=0} c^{-2}(\gamma(t)) \ dt, 
\]
the earlier proof shows that $c' =0 $ when $\frac{d}{ds}|_{s=0}
\text{Tr} \p_t G_s(t) = 0$.
\end{proof}

\bibliography{poisson_summation}
\bibliographystyle{plain}

\end{document}